\numberwithin{equation}{section}
\numberwithin{algorithm}{section}
\theoremstyle{plain}
\newtheorem{theorem}{Theorem}[section]
\newtheorem{lemma}[theorem]{Lemma}
\newtheorem{corollary}[theorem]{Corollary}
\theoremstyle{definition}
\theoremstyle{remark}
\DeclareMathOperator{\gl}{GL}
\newcommand{\N}{\mathbb{N}}
\newcommand{\C}{\mathbb{C}}
\newcommand{\F}{\mathbb{F}}
\newcommand{\R}{\mathbb{R}}
\newcommand{\x}{\mathbf{x}}
\newcommand{\y}{\mathbf{y}}
\newcommand{\cD}{\mathcal{D}}
\newcommand{\cN}{\mathcal{N}}
\newcommand{\bi}{\mathbf{i}}
\newcommand{\bu}{\mathbf{u}}
\newcommand{\bv}{\mathbf{v}}
\newcommand{\bw}{\mathbf{w}}
\newcommand{\by}{\mathbf{y}}
\newcommand{\0}{\mathbf{0}}
\newcommand{\1}{\mathbf{1}}
\begin{document}
\title[Infinimum of a matrix norm of $A$]
{Infimum of a matrix norm of $A$ induced \\by an absolute  vector norm}
\author{Shmuel Friedland}
\date{May 3, 2021}
\begin{abstract}  
We characterize the infimum of a matrix norm of a square matrix $A$ induced by an absolute norm, over the fields of real and complex numbers.  Usually this infimum is greater than the spectral radius of $A$. If $A$ is sign equivalent to a nonnegative matrix $B$ then this infimum is the spectral radius of $B$.
\end{abstract}
\maketitle
 \noindent {\bf 2020 Mathematics Subject Classification.} 15A42, 15A60, 15B48.

\noindent \emph{Keywords}:  Absolute norm, matrix norm, nonnegative matrices, spectral radius.
\maketitle

\section{Introduction} \label{sec:intro}
Let $\F$ be the field of real or complex numbers $\R$ and $\C$ respectively.  Denote by $\F^n$ and $\F^{n\times n}$ the vectors spaces column vectors and the matrices respectively.  Assume that $\|\cdot\|_{\F}:\F^n\to [0,\infty)$ is a norm on $\F^n$. 
Then $\|\x\|_\F$ induces an operator norm on $\F^{n\times n}$, namely 
\begin{equation*}
\|A\|_\F=\max\{\|A\x\|_\F, \x\in\F^n,\|\x\|_\F\le 1\}. 
\end{equation*}

For $A=[a_{ij}]\in\F^{n\times n}$ let $\rho(A)$ be the spectral radius of $A$, which is  maximum modulus of all real and complex eigenvalues of $A$.
The following inequality is well known over the complex numbers:
\begin{equation}\label{specupbd}
\rho(A)\le \|A\|_\F \textrm{ for }A\in\F^{n\times n}.
\end{equation}
We will show that this inequality holds also over real numbers.

Denote by $\gl(n,\F)\subset \F^{n\times n}$ the group of invertible matrices.  Hence  the above inequality yields:
\begin{equation}\label{imspecupbd}
\rho(A)\le \inf\{\|PAP^{-1}\|_\F, \,P\in\gl(n,\F)\}.
\end{equation}
(For $\F=\C$ this result is well known.)

The following result of the author characterizes the norms over $\C^n$ for which equality in \eqref{imspecupbd}  holds for all $A\in\C^{n\times n}$ \cite{Fri79}.
A norm $\|\x\|_\F$ is called an absolute norm if $\|\x\|_\F=\||\x|\|_\F$, where $|(x_1,\ldots,x_n)^\top|:=(|x_1|,\ldots,|x_n|)^\top$ and $|A|=[|a_{ij}|]$.   Recall that an absolute norm is monotone: $\|\x\|_\F\le \|\y\|_\F$ if $|\x|\le |\y|$ \cite[Theorem 7.1.3]{Frib}.

We call $\|A\|_\F$ a matrix absolute norm if $\|\x\|_\F$ is an absolute  norm.  
A norm $\|\x\|_\F$ is called a transform absolute norm if there exists  $P\in\gl(n,\F)$ and an absolute norm $\nu(\x)$ such that $\|\x\|=\nu(P\x)$.
Theorem 3 in \cite{Fri79} shows equality in \eqref{imspecupbd} holds for all $A\in\C^{n\times n}$ if and only if the norm $\|\x\|_\C$ is an absolute transform norm.  In particular we deduce the 
well known approximation result due to Householder \cite[Theorem 4.4]{Hou58}: For each matrix $A\in \C^{n\times n}$ and $\varepsilon>0$ there exists a norm on $\C^n$ such that
\begin{equation}\label{epsupbd}
\|A\|_\C\le \rho(A)+\varepsilon.
\end{equation}
From the proof of this result in \cite{Hou58} or \cite[(2), Section 2.3]{Hou64} it follows that $\|\cdot\|_\C$ can be chosen a transform absolute norm.  (In \cite{Fri79} it was claimed erroneously that the proof of \eqref{epsupbd} yields that  $\|\cdot\|_\C$ can be chosen an absolute norm.)  

Jamal Najim asked if one can choose an absolute norm $\|\cdot\|_\C$ such that \eqref{epsupbd} holds \footnote{Personal communication.}.   We show that this is false even for $2\times 2$ real matrices.

The aim of this note to give a necessary and sufficient conditions when there exists an absolute norm on $\F^n$ such that for a given $A\in\F^{n\times n}$ and $\varepsilon>0$ one has the inequality
\begin{equation}\label{epsupbdF}
\|A\|_\F\le \rho(A)+\varepsilon.
\end{equation}

This condition can be stated as follows.  Let $\|\x\|_2$ be the Euclidean norm on $\F^n$.  Denote by $\cD_n(\F)\subset \gl(n,\F)$ the subgroup of diagonal matrices 
whose absolute value of diagonal elements is $1$.  For a positive integer $m$ let $[m]=\{1,\ldots,m\}$.
Then \eqref{epsupbdF} holds if and only if
\begin{equation}\label{necsufcon}
\begin{aligned}
\lim_{m\to\infty}\max\{
(\rho(A)+\varepsilon)^{-k}\|AD_1A\cdots D_{k-1} A D_k\x\|_2, \\D_1,\dots,D_k\in\cD_n(\F),\x\in\F^n,\|\x\|_2=1, k\in[m]\}<\infty.
\end{aligned}
\end{equation}

Equivalently, we characterize the following infimum: Let $\cN(n,\F)$ be a set of absolute norms on $\F^n$.  Denote
\begin{equation}\label{defmuA}
\begin{aligned}
\mu(A)=\limsup_{k\to\infty}\big(\max\{  \|AD_1A\cdots D_{k-1} A D_k\x\|_2 \\D_1,\dots,D_k\in\cD_n(\F),\x\in\F^n,\|\x\|_2=1\}\big)^{1/k}.
\end{aligned}
\end{equation}
Then 
\begin{equation}\label{mincharmuA}
\mu(A)=\inf\{\|A\|_\F, \, \|\cdot \|_\F\in \cN(n,\F)\}.
\end{equation}

We now survey briefly the results of this paper.  In Section \ref{sec:specupbdl} we prove the inequality \eqref{specupbd}.  In Section \ref{sec:main} we prove the inequality \eqref{mincharmuA}. In Section \ref{sec:addres} we prove the inequality
$\mu(A)\le \rho(|A|)$.  Equality holds if $A$ is sign equivalent to $|A|$.
\section{Proof of inequality \eqref{specupbd}}\label{sec:specupbdl}
\begin{lemma}\label{specupbdlem}
Let $\|\x\|_\F$ be a norm on $\F^n$.  Then the inequality \eqref{specupbd}
holds.
\end{lemma}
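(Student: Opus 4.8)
The plan is to handle the two fields separately: for $\F=\C$ the usual textbook argument applies, while for $\F=\R$ a short extra step is needed because $A$ may have genuinely complex eigenvalues whose eigenvectors do not lie in $\R^n$.

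For $\F=\C$ one argues directly: if $\lambda\in\C$ is an eigenvalue of $A$ with eigenvector $\x\neq\0$, then $A\x=\lambda\x$ together with homogeneity of the norm gives $|\lambda|\,\|\x\|_\C=\|A\x\|_\C\le\|A\|_\C\,\|\x\|_\C$, hence $|\lambda|\le\|A\|_\C$; maximizing over the eigenvalues of $A$ yields $\rho(A)\le\|A\|_\C$.

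For $\F=\R$, the same computation is unavailable when $A$ has a non-real eigenvalue. First I would note that, being an operator norm, $\|\cdot\|_\R$ is submultiplicative on $\R^{n\times n}$, so $\|A^k\|_\R\le\|A\|_\R^{\,k}$ for every $k\in\N$. Since $\R^{n\times n}$ is finite dimensional, $\|\cdot\|_\R$ is equivalent to the Frobenius norm $\|\cdot\|_F$, so there is a constant $C=C(n)>0$ with $\|B\|_F\le C\|B\|_\R$ for all $B\in\R^{n\times n}$; combining the two, $\|A^k\|_F^{1/k}\le C^{1/k}\|A\|_\R$. Viewing $A$ as a complex matrix and applying Gelfand's spectral radius formula to the submultiplicative norm $\|\cdot\|_F$ (whose value on a real matrix is the same computed over $\R$ or over $\C$), one has $\rho(A)=\lim_{k\to\infty}\|A^k\|_F^{1/k}$; letting $k\to\infty$ in the previous inequality yields $\rho(A)\le\|A\|_\R$.

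I expect the only delicate point to be the real/complex bookkeeping: Gelfand's formula must be applied to a norm that does not depend on the ground field (the Frobenius norm works, which is why it is convenient here), and $\rho(A)$ must be read as the maximum modulus of the complex eigenvalues of $A$. If one prefers to avoid Gelfand's formula, the same conclusion follows elementarily: a non-real eigenvalue $\lambda=a+bi$ ($b\neq 0$) of $A$ has an eigenvector $\x+i\y$ with $\x,\y\in\R^n$ linearly independent, $W=\spa_\R\{\x,\y\}$ is a two-dimensional $A$-invariant subspace, and in the basis $(\x,\y)$ the restriction $A|_W$ equals $|\lambda|$ times a rotation matrix; since the restriction of $\|\cdot\|_\R$ to $W$ is equivalent to the Euclidean norm and rotations preserve the Euclidean norm, there is $c>0$ with $\|A^k w\|_\R\ge c\,|\lambda|^{k}\|w\|_\R$ for all $w\in W$ and all $k$, so $\|A\|_\R^{\,k}\ge\|A^k\|_\R\ge c\,|\lambda|^{k}$, and taking $k$-th roots as $k\to\infty$ gives $|\lambda|\le\|A\|_\R$; maximizing over the eigenvalues gives \eqref{specupbd}.
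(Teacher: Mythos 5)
Your proposal is correct, but for the real case it takes a genuinely different route from the paper. The paper also starts with the complex eigenvector argument, but then handles $\F=\R$ by a case analysis on the argument $\theta$ of a peripheral eigenvalue $\lambda=\rho(A)e^{2\pi\bi\theta}$: if $\lambda$ is real one argues directly; if $\theta$ is rational one passes to a power $A^k$ having the positive eigenvalue $\rho(A)^k$ and uses $\|A\|_\R^k\ge\|A^k\|_\R$; if $\theta$ is irrational one writes the eigenvector as $\bu+\bi\bv$ and uses a Diophantine approximation ($|k_l\theta-m_l|\le\delta/(2\pi)$ along a sequence $k_l\to\infty$) to force $\|A^{k_l}\bu\|_\R\ge\tfrac12\rho(A)^{k_l}$, then takes $k_l$-th roots. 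Both of your arguments bypass this rational/irrational split: the Gelfand route reduces everything to $\rho(A)=\lim_k\|A^k\|_F^{1/k}$ applied to a field-independent norm plus equivalence of norms on $\R^{n\times n}$, while your invariant-subspace route isolates the real Jordan block $|\lambda|R_\phi$ on $W=\spa_\R\{\x,\y\}$ and uses equivalence of the restricted norm with the coordinate Euclidean norm on $W$ (in the basis $(\x,\y)$ --- note it is this coordinate norm, not the ambient Euclidean norm, that the rotation preserves, which your phrasing correctly intends). What each approach buys: the paper's proof is self-contained modulo the standard approximation of irrationals and stays entirely inside the given norm; your Gelfand argument is the shortest but imports the spectral radius formula; your subspace argument is elementary, avoids any limit over a special subsequence, and arguably gives the cleanest quantitative bound $\|A^k w\|_\R\ge c|\lambda|^k\|w\|_\R$ valid for every $k$. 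All three are valid proofs of Lemma \ref{specupbdlem}.
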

\begin{proof} Suppose first that $\F=\C$.  Then there exists an eigenvalue of $\lambda\in\C$ of $A$ such that $|\lambda|=\rho(A)$,  Let $\x\in\C^n$ be the corresponding eigenvector satisfying $\|\x\|_\C=1$.  Then
\begin{equation*}
\|A\|_\C\ge \|A\x\|_\C=\|\lambda\x\|_\C=\rho(A).
\end{equation*}
This shows \eqref{specupbd}.

Assume now that $A\in \R^{n\times n}$.  Suppose first that $A$ has a real eigenvalue $\lambda$ such that $|\lambda|=\rho(A)$.  Then the above arguments yield that $\|A\|_\R\ge \rho(A)$.  Suppose that $\rho(A)>0$ and $\lambda=\rho(A)\zeta$ where $\zeta=e^{2\pi\bi\theta}$ where $\theta\in [0,1)$. Suppose first that $\theta$ is a rational number.  Then $\zeta^k=1$ for some positive integer $k$.  That is $\lambda^k$ is a positive eigenvalue of $A^k$.
Recall that $\rho(A^k)=\rho(A)^k$.  Using the previous result we obtain
\begin{equation*}
\|A\|_\R^k\ge \|A^k\|_\R\ge \rho(A^k)=\rho(A)^k.
\end{equation*}
Hence \eqref{specupbd} holds.  

Suppose second that $\theta$ is irrational.  Then the complex eigenvector of $A$ corresponding to $\lambda$ is $\bu+\bi\bv$ can be chosen to satisfy
\begin{equation*}
A^k(\bu+\bi\bv)=\rho(A)^ke^{2\pi\bi k\theta}(\bu+\bi\bv), \quad \|\bu\|=1,\bv\in\R^n\setminus\{\0\}, k\in\N.
\end{equation*}
Hence
\begin{eqnarray*}
\|A^k\bu\|_\R=\rho(A)^k\|(\cos 2\pi k\theta)\bu -(\sin 2\pi k\theta)\bv\|_\R\ge\\ 
\rho(A)^k\big(|\cos 2\pi k\theta|\|\bu\|_\R- |\sin 2\pi k\theta|\|\bv\|_\R\big).
\end{eqnarray*}
Recall that for a given $\delta\in (0,1))$ there exists an infinite sequence  of positive integers $k_l,m_l$, for $l\in\N$, such that $|k_l\theta -m_l|\le \delta/(2\pi)$.  Hence there exists an infinite sequence of positive integers $k_l$ such that 
\begin{eqnarray*}
\|A^{k_l}\bu\|_\R\ge\frac{\rho(A)^{k_l}}{2}\|\bu\|_\R=\frac{\rho(A)^{k_l}}{2}, \quad l\in\N.
\end{eqnarray*}
Therefore
\begin{equation*}
\|A\|_\R \ge (\|A^{k_l}\|_\R)^{1/k_l}\ge (\|A^{k_l}\bu\|_\R)^{1/k_l}\ge \rho(A)2^{-1/k_l}.
\end{equation*}
Letting $l\to\infty$ we deduce that $\|A\|_\R\ge \rho(A)$.

\end{proof}
\section{Proof of the main result}\label{sec:main}
\begin{lemma}\label{upbdprodlem}
Let $\|\x\|_\F$ be an absolute norm on $\F^n$.  Then for any $A\in\F^{n\times n}$
the following relations holds:
\begin{equation}\label{bdsabnorm}
\begin{aligned}
\|D_1 A D_2\|_\F=\|A\|_\F \textrm{ for all } D_1,D_2\in \cD_n(\F),\\
\|AD_1AD_2\cdots AD_{k}\|_\F\le \|A\|_\F^k, \quad k\in\N,\\
\|A\|_\F\ge \max\{\rho(DA), D\in\cD_n(\F)\}.
\end{aligned}
\end{equation}
\end{lemma}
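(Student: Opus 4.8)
The plan is to obtain all three relations from one observation: every $D\in\cD_n(\F)$ acts as a surjective linear isometry of $(\F^n,\|\cdot\|_\F)$. Indeed, if $D=\diag(d_1,\dots,d_n)$ with $|d_1|=\dots=|d_n|=1$, then $|Dx|=|x|$ coordinatewise, so the defining property of an absolute norm gives $\|Dx\|_\F=\||Dx|\|_\F=\||x|\|_\F=\|x\|_\F$ for every $x\in\F^n$; since $D$ is invertible with $D^{-1}$ again in $\cD_n(\F)$, it maps the closed unit ball $\{x:\|x\|_\F\le 1\}$ bijectively onto itself. I would record this fact at the outset.

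For the identity $\|D_1AD_2\|_\F=\|A\|_\F$, I would fix $D_1,D_2\in\cD_n(\F)$, apply the isometry property of $D_1$ to the vector $AD_2x$ to get $\|D_1AD_2x\|_\F=\|AD_2x\|_\F$ for all $x$, then take the maximum over the unit ball and substitute $z=D_2x$; this substitution is legitimate precisely because $D_2$ carries the unit ball onto itself, and it gives $\|D_1AD_2\|_\F=\max_{\|z\|_\F\le1}\|Az\|_\F=\|A\|_\F$. Specializing to $D_1=I$ or $D_2=I$ (both lie in $\cD_n(\F)$) yields the auxiliary equalities $\|AD\|_\F=\|DA\|_\F=\|A\|_\F$ for every $D\in\cD_n(\F)$, which the remaining two parts will use.

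The inequality $\|AD_1AD_2\cdots AD_k\|_\F\le\|A\|_\F^k$ then follows immediately: write the product as $(AD_1)(AD_2)\cdots(AD_k)$, apply submultiplicativity of the operator norm induced by $\|\cdot\|_\F$, and replace each factor $\|AD_i\|_\F$ by $\|A\|_\F$ using the previous paragraph. For the last inequality, I would fix $D\in\cD_n(\F)$, apply Lemma \ref{specupbdlem} to the matrix $DA$ to obtain $\rho(DA)\le\|DA\|_\F$, then use $\|DA\|_\F=\|A\|_\F$ and take the maximum over $D\in\cD_n(\F)$.

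I do not expect a genuine obstacle here; the lemma is essentially bookkeeping around the isometry property of $\cD_n(\F)$ combined with submultiplicativity of operator norms and the already-proved spectral bound. The one point that deserves an explicit sentence rather than being waved through is the change of variables in the second paragraph, which relies on the surjectivity — not merely the norm-preservation — of the action of $D_2$ on the unit ball, i.e. on $\cD_n(\F)$ being a group.
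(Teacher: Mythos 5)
Your proposal is correct and follows essentially the same route as the paper: use the fact that each $D\in\cD_n(\F)$ is a surjective isometry of the absolute norm to get the first equality, combine it with submultiplicativity for the second, and apply Lemma \ref{specupbdlem} to $DA$ together with $\|DA\|_\F=\|A\|_\F$ for the third. Your explicit remark about needing surjectivity of $D_2$ on the unit ball is a welcome clarification of a step the paper handles by writing the quotient $\|AD_2\x\|_\F/\|D_2\x\|_\F$.
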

\begin{proof} Assume that $D_1,D_2\in \cD_n(\F)$.
Since $\|\x\|_\F$ is an absolute norm we obtain $\|D_2\x\|_\F=\|\x\|_\F$, and $\|D_2\|_\F=1$.
Hence 
\begin{equation*}
\|D_1AD_2\x\|_\F/\|\x\|_\F=\|AD_2\x\|_\F/\|D_2\x\|_\F \textrm{ for } \x\ne \0.
\end{equation*}
This proves the first equality of \eqref{bdsabnorm}.  Use the submultiplicativity of the norm $\|B\|_\F$ and the first equality \eqref{bdsabnorm} to deduce the second equality of \eqref{bdsabnorm}.   

The inequality \eqref{specupbd} yields $\|A\|_\F=\|D_1A\|_\F\ge \rho(D_1A)$.  This proves the third inequality of \eqref{bdsabnorm}.
\end{proof}
\begin{corollary}\label{countex}  Let 
\begin{equation*}\label{countex1}
A=\left[\begin{array}{rr}1&1\\-1&-1\end{array}\right], D=\left[\begin{array}{rr}1&0\\0&-1\end{array}\right],
B= DA=\begin{bmatrix}1&1\\1&1\end{bmatrix}.
\end{equation*}
Then $\rho(A)=0, \rho(B)=\|A\|_2=2$.  Hence for any absolute norm on $\R^2$ we have the sharp inequality $\|A\|_\R\ge 2$.
\end{corollary}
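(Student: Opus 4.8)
The plan is to verify the three numerical identities by direct computation and then read off the norm inequality from the third line of \eqref{bdsabnorm}.

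First I would compute the spectral data. Since $\tr A = 1+(-1)=0$ and $\det A = (1)(-1)-(1)(-1)=0$, both eigenvalues of $A$ vanish, so $\rho(A)=0$; equivalently one checks $A^2=0$ directly. For $B$, the matrix $B=DA=\left[\begin{smallmatrix}1&1\\1&1\end{smallmatrix}\right]$ has $\tr B=2$ and $\det B=0$, so its eigenvalues are $0$ and $2$ and $\rho(B)=2$. Finally, to obtain $\|A\|_2$ I would form $A^\top A=\left[\begin{smallmatrix}2&2\\2&2\end{smallmatrix}\right]$, whose largest eigenvalue is $4$; hence the largest singular value of $A$ equals $2$, i.e. $\|A\|_2=2$.

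For the main assertion, let $\|\cdot\|_\R$ be an arbitrary absolute norm on $\R^2$. Since $D=\diag(1,-1)\in\cD_2(\R)$, the third inequality of \eqref{bdsabnorm} in Lemma \ref{upbdprodlem} gives $\|A\|_\R\ge\rho(DA)=\rho(B)=2$, which is the claimed bound. It is sharp because the Euclidean norm $\|\cdot\|_2$ is itself an absolute norm (as $\|\x\|_2=\||\x|\|_2$) and attains $\|A\|_2=2$, so the value $2$ is achieved by an admissible norm.

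There is essentially no technical obstacle here: all the content is carried by the third line of \eqref{bdsabnorm}, and the interest of the corollary is conceptual rather than computational—it exhibits a matrix $A$ with $\rho(A)=0$ for which \emph{no} absolute norm on $\R^2$ can bring $\|A\|_\R$ below $2$. In particular \eqref{epsupbdF} fails for this $A$ whenever $\varepsilon<2$, answering Najim's question in the negative already for $2\times 2$ real matrices.
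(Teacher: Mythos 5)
Your proposal is correct and follows exactly the route the paper intends: the paper states this corollary without a written proof because it is an immediate consequence of the third inequality of \eqref{bdsabnorm} applied with $D=\diag(1,-1)$, which is precisely what you do, and your verifications of $\rho(A)=0$, $\rho(B)=2$, and $\|A\|_2=2$ are all accurate. The observation that sharpness is witnessed by the Euclidean norm (itself absolute) is a correct and welcome addition.
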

\begin{theorem}\label{mainthm}
Assume that $A\in \F^{n\times n}$.  Let $\mu(A)$ be defined by \eqref{defmuA}.  Then equality \eqref{mincharmuA} holds.
\end{theorem}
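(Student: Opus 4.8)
The plan is to prove two inequalities whose combination is exactly \eqref{mincharmuA}: first, $\mu(A)\le\|A\|_\F$ for \emph{every} absolute norm $\|\cdot\|_\F$ on $\F^n$, and second, $\inf\{\|A\|_\F:\|\cdot\|_\F\in\cN(n,\F)\}\le\mu(A)$. Write $P_0=\{I\}$ and, for $k\ge 1$,
\[
P_k=\{AD_1AD_2\cdots AD_k:D_1,\dots,D_k\in\cD_n(\F)\},
\]
and set $\alpha_k=\sup_{M\in P_k}\|M\|_2$, where $\|\cdot\|_2$ also denotes the operator norm induced by the Euclidean norm. Since $\cD_n(\F)$ is compact (a finite group if $\F=\R$, a torus if $\F=\C$), each $P_k$ is compact, these suprema are attained, and $\alpha_k$ equals the quantity under the $\limsup$ in \eqref{defmuA}, so $\mu(A)=\limsup_{k\to\infty}\alpha_k^{1/k}$. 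For the first inequality, fix an absolute norm $\|\cdot\|_\F$. By Lemma \ref{upbdprodlem}, $\|M\|_\F\le\|A\|_\F^k$ for every $M\in P_k$, and since any two norms on $\F^n$ are equivalent there are constants $0<c_1\le c_2$ with $c_1\|\x\|_2\le\|\x\|_\F\le c_2\|\x\|_2$ for all $\x\in\F^n$, whence $\|B\|_2\le(c_2/c_1)\|B\|_\F$ for all $B\in\F^{n\times n}$. Applying this to $M\in P_k$ gives $\alpha_k\le(c_2/c_1)\|A\|_\F^k$, so $\alpha_k^{1/k}\le(c_2/c_1)^{1/k}\|A\|_\F$ and therefore $\mu(A)\le\|A\|_\F$. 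Thus $\mu(A)$ is a lower bound for the infimum in \eqref{mincharmuA}.

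For the reverse inequality it suffices, given any $t>\mu(A)$, to produce an absolute norm $N$ on $\F^n$ with $\|A\|_N\le t$. I would take
\[
N(\x)=\sup_{k\ge 0}\ t^{-k}\sup_{M\in P_k}\|M\x\|_2 .
\]
To see that $N$ is finite-valued and comparable to $\|\cdot\|_2$, choose $t'$ with $\mu(A)<t'<t$; then $\alpha_k<(t')^k$ for all large $k$, so $t^{-k}\alpha_k\le(t'/t)^k\to 0$ and $C:=\sup_{k\ge 0}t^{-k}\alpha_k<\infty$. Since the term $k=0$ equals $\|\x\|_2$, this gives $\|\x\|_2\le N(\x)\le C\|\x\|_2$, so $N$ is positive definite; homogeneity is immediate, and the triangle inequality follows from the subadditivity of each map $\x\mapsto\|M\x\|_2$ together with the fact that a supremum of sums is at most the sum of the suprema. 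Hence $N$ is a norm.

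To check that $N$ is absolute, note that $\cD_n(\F)$ is a group, so $P_kD=P_k$ for every $D\in\cD_n(\F)$ (replace $D_k$ by $D_kD$); therefore $\sup_{M\in P_k}\|MD\x\|_2=\sup_{M\in P_k}\|M\x\|_2$, so $N(D\x)=N(\x)$, and since $|\x|=D\x$ for a suitable $D\in\cD_n(\F)$ this says exactly $N(\x)=N(|\x|)$, i.e. $N\in\cN(n,\F)$. Finally, appending $D_{k+1}=I$ shows $P_kA\subseteq P_{k+1}$, so for every $\x$
\begin{align*}
N(A\x)&=\sup_{k\ge 0}t^{-k}\sup_{M\in P_k}\|MA\x\|_2\le\sup_{k\ge 0}t^{-k}\sup_{M'\in P_{k+1}}\|M'\x\|_2\\
&=t\sup_{j\ge 1}t^{-j}\sup_{M'\in P_j}\|M'\x\|_2\le t\,N(\x),
\end{align*}
whence $\|A\|_N\le t$. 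Letting $t\downarrow\mu(A)$ gives $\inf\{\|A\|_\F:\|\cdot\|_\F\in\cN(n,\F)\}\le\mu(A)$, which together with the previous paragraph proves \eqref{mincharmuA}.

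The step I expect to require the most care is the finiteness of $N$: this is precisely where the hypothesis $t>\mu(A)=\limsup_k\alpha_k^{1/k}$ --- equivalently, the limsup condition \eqref{necsufcon} --- is used, and it is what dictates the normalization by $t^{-k}$ in the definition of $N$. (One can also note $\alpha_{j+k}\le\alpha_j\alpha_k$, so the $\limsup$ in \eqref{defmuA} is in fact a limit equal to $\inf_k\alpha_k^{1/k}$, though this is not needed below.) Everything else is bookkeeping: the norm axioms for $N$, absoluteness via the group identity $P_kD=P_k$, and the one-step inclusion $P_kA\subseteq P_{k+1}$ that yields $\|A\|_N\le t$.
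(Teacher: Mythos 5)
Your proposal is correct and follows essentially the same route as the paper: the lower bound $\mu(A)\le\|A\|_\F$ via Lemma \ref{upbdprodlem} and norm equivalence, and the upper bound via the norm $N(\x)=\sup_{k\ge 0}t^{-k}\sup_{M\in P_k}\|M\x\|_2$, which is exactly the paper's construction in \eqref{necsufcon3}, with finiteness, absoluteness, and $\|A\|_N\le t$ established by the same observations (terms decaying geometrically, $P_kD=P_k$, and appending $D_{k+1}=I$). Your write-up is in fact a cleaner presentation of the same argument (the remark on submultiplicativity of $\alpha_k$ is a nice bonus, though unused); the only microscopic slip is that $P_0D=\{D\}\ne P_0$, but the $k=0$ term is still $D$-invariant because the Euclidean norm is absolute.
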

\begin{proof}  Assume that $\|\x\|_\F$ is an absolute norm.
Recall that all norms on $\F^{n\times n}$ are equivalent.  Hence for a given absolute norm $\|\x\|_\F$ one has an inequality
\begin{equation*}
\frac{1}{K(\|\cdot\|_\F)}\|\x\|_\F\le \|\x\|_2\le K(\|\cdot\|_\F)\|\x\|_\F, \x\in\F^n,
\end{equation*}
for some $K(\|\cdot\|_\F)\ge 1$.  Therefore in the definition of $\mu(A)$ given by \eqref{defmuA} we can replace the norm $\|\x\|_2$ by an absolute norm $\|\x\|$,  Use the second inequality of \eqref{bdsabnorm} to obtain
\begin{equation}\label{upbdmuA}
\mu(A)\le \|A\|_\F.
\end{equation}
Let 
\begin{equation*}
\tilde\mu(A)=\inf\{\|A\|_\F, \|\cdot\|_\F\in\cN(n,\F)\}.
\end{equation*}
The inequality \eqref{upbdmuA} yields that $\mu(A)\le \tilde\mu(A)$.

We now show that for each $\varepsilon>0$ there exists an absolute  norm $\|\x\|_\F$ on $\F^n$ such that $\|A\|_\F\le \mu(A)+\varepsilon$.  From the definition of $\mu(A)$ it is straightforward to show that 
\begin{equation}\label{necsufcon1}
\begin{aligned}
\lim_{m\to\infty}\max\{
(\mu(A)+\varepsilon)^{-m}\|AD_1A\cdots D_{m-1} A D_m\x\|_2, \\D_1,\dots,D_m\in\cD_n(\F),\x\in\F^n,\|\x\|_2=1\}=0.
\end{aligned}
\end{equation}
Hence
\begin{equation}\label{necsufcon2}
\begin{aligned}
\lim_{m\to\infty}\max\{
(\mu(A)+\varepsilon)^{-k}\|AD_1A\cdots D_{k-1} A D_k\x\|_2, \\D_1,\dots,D_k\in\cD_n(\F),\x\in\F^n,\|\x\|_2=1, k+1\in[m]\}<\infty.
\end{aligned}
\end{equation}

Define
\begin{equation}\label{necsufcon3}
\begin{aligned}
\|\x\|_\F:=
\lim_{m\to\infty}\max\{
(\mu(A)+\varepsilon)^{-k}\|AD_1A\cdots D_{k-1} A D_k\x\|_2, \\D_1,\dots,D_k\in\cD_n(\F),\x\in\F^n, k+1\in[m]\}<\infty.
\end{aligned}
\end{equation}
(We let the values of $\mu(A)+\varepsilon)^{-k}\|AD_1A\cdots D_{k-1} A D_k\x\|_2$ to be $\|\x\|_2$ and $\|AD_1\x\|_2$ for $k=0$ and $k=1$ respectively.)
Clearly $\|\0\|_\F=0$.
The inequality \eqref{necsufcon2} yields that $\|\x\|_\F<\infty$ for $\x\ne 0$.  Clearly,
$\|t\x\|_\F=|t|\|\x\|_\F$ for $t\in\F$.  
The inequality \eqref{necsufcon1} yields that for each $\x\in\F$ there exists a nonnegative integer $k=k(\x)$ and $D_1,\ldots, D_{k(\x)}\in\cD_n(\F)$ such that  
\begin{equation*}
\|\x\|_\F=\begin{cases}
\|\x\|_2 \textrm{ if } k(\x)=0,\\
(\mu(A)+\varepsilon)^{-1} \|AD_1\x\|_2 \textrm{ if } k(\x)=1,\\
(\mu(A)+\varepsilon)^{-k(\x)}\|AD_1A\cdots D_{k(\x)-1} A D_{k(\x)}\x\|_2  \textrm{ if } k(\x)>1.
\end{cases}
\end{equation*}
The maximum definition of $\|\x\|_\F$ yileds that $\|\x\|_\F$ satisfies the triangle inequality and the equality $\|D\x\|_\F=\|\x\|_\F$ for all $D\in\cD_n(\F)$.  Hence $\|\x\|_\F$ is an absolute norm on $\F^n$.  It is left to show that 
\begin{equation}\label{fundup}
\|A\x\|_\F\le (\mu(A)+\varepsilon)\|\x\|_\F, \quad \x\in\F^n.
\end{equation}
Observe that 
\begin{eqnarray*}
\|A\x\|_\F:=(\mu(A)+\varepsilon)
\lim_{m\to\infty}\max\{
(\mu(A)+\varepsilon)^{-(k+1)}\|AD_1A\cdots D_{k-1} A D_k A\x\|_2, \\D_1,\dots,D_k\in\cD_n(\F),\x\in\F^n, k+1\in[m]\}<\infty.
\end{eqnarray*}
Clearly 
\begin{eqnarray*}
AD_1A\cdots D_{k-1} A D_k A\x=AD_1A\cdots D_{k-1} A D_k A D_{k+1}\x, \quad D_{k+1}=I_n.
\end{eqnarray*}
Hence 
\begin{eqnarray*}
\max\{
(\mu(A)+\varepsilon)^{-(k+1)}\|AD_1A\cdots D_{k-1} A D_k A\x\|_2, \\D_1,\dots,D_k\in\cD_n(\F),\x\in\F^n, k+1\in[m]\}\le\\
\max\{
(\mu(A)+\varepsilon)^{-q}\|AD_1A\cdots D_{q-1} A D_q A\x\|_2, \\D_1,\dots,D_q\in\cD_n(\F),\x\in\F^n, q+1\in[m+1]\}
\end{eqnarray*}
This establishes \eqref{fundup}.  Hence $\|A\|_\F\le \mu(A)+\varepsilon$.
\end{proof}
\begin{theorem}\label{thm2}  Let $A\in\F^{n\times n}$ then there exists an absolute norm $\|\x\|_\F$ and $\varepsilon>0$ such that the inequality \eqref{epsupbdF} holds if and only if the condition \eqref{necsufcon} holds.
\end{theorem}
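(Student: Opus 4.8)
The plan is to prove the two implications of the equivalence separately. The forward (sufficiency) direction will be a direct adaptation of the norm constructed in the proof of Theorem~\ref{mainthm}, with $\mu(A)$ replaced throughout by $\rho(A)$; the reverse (necessity) direction is a short estimate built from the second relation of \eqref{bdsabnorm} together with the equivalence of norms on $\F^n$.

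\emph{Necessity.} Suppose there is an absolute norm $\|\cdot\|_\F$ and an $\varepsilon>0$ with $\|A\|_\F\le\rho(A)+\varepsilon$. I would first fix $K\ge 1$ with $K^{-1}\|\x\|_\F\le\|\x\|_2\le K\|\x\|_\F$ for all $\x\in\F^n$, as in the proof of Theorem~\ref{mainthm}. Then for any $D_1,\dots,D_k\in\cD_n(\F)$ and any $\x$ with $\|\x\|_2=1$ one chains $\|AD_1A\cdots AD_k\x\|_2\le K\,\|AD_1A\cdots AD_k\x\|_\F\le K\,\|AD_1A\cdots AD_k\|_\F\,\|\x\|_\F\le K\,\|A\|_\F^{\,k}\,\|\x\|_\F\le K^{2}(\rho(A)+\varepsilon)^{k}$, where the third inequality is the second relation of \eqref{bdsabnorm} and the last uses $\|\x\|_\F\le K$. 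Hence the maximum occurring in \eqref{necsufcon} is bounded by $K^2$ for every $m$, so its limit is finite, i.e.\ \eqref{necsufcon} holds. This step is routine.

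\emph{Sufficiency.} Assume \eqref{necsufcon} holds for some $\varepsilon>0$; equivalently $C:=\sup_{k\ge 0}(\rho(A)+\varepsilon)^{-k}h_k<\infty$, where $h_k:=\max\{\|AD_1A\cdots AD_k\x\|_2:\ D_1,\dots,D_k\in\cD_n(\F),\ \|\x\|_2=1\}$, with the conventions $h_0=1$ and $h_1=\max_{D_1}\|AD_1\x\|_2$ as in Theorem~\ref{mainthm}. I would then \emph{define} $\|\x\|_\F$ to be the right-hand side of \eqref{necsufcon3} with $\mu(A)$ replaced by $\rho(A)$, that is, the supremum over $k\ge 0$ of $(\rho(A)+\varepsilon)^{-k}\max\{\|AD_1A\cdots AD_k\x\|_2:\ D_i\in\cD_n(\F)\}$ (again with the $k=0,1$ conventions), and verify four points. (i) \emph{Finiteness and definiteness}: each term of the supremum is at most $(\rho(A)+\varepsilon)^{-k}h_k\|\x\|_2\le C\|\x\|_2$, while the $k=0$ term equals $\|\x\|_2$, so $\|\x\|_2\le\|\x\|_\F\le C\|\x\|_2$. (ii) \emph{It is a norm}: for each $k$ the map $\x\mapsto(\rho(A)+\varepsilon)^{-k}\max_{D_i}\|AD_1A\cdots AD_k\x\|_2$ is a seminorm, being the maximum over the compact set $\cD_n(\F)^k$ of the seminorms $\x\mapsto\|M\x\|_2$; a pointwise-finite supremum of seminorms is a seminorm, which by (i) is a genuine norm. (iii) \emph{It is absolute}: replacing $D_k$ by $D_kD$ for any $D\in\cD_n(\F)$ only reparametrizes the inner maximum, so $\|D\x\|_\F=\|\x\|_\F$ for all $D\in\cD_n(\F)$, and this invariance is precisely the absolute-norm property, for both $\F=\R$ and $\F=\C$. (iv) $\|A\|_\F\le\rho(A)+\varepsilon$: using $AD_1A\cdots AD_kA\x=AD_1A\cdots AD_kAD_{k+1}\x$ with $D_{k+1}=I_n$, the $k$-th term in the supremum defining $\|A\x\|_\F$ is bounded by $(\rho(A)+\varepsilon)$ times the $(k{+}1)$-st term in the supremum defining $\|\x\|_\F$; taking suprema over $k$ gives $\|A\x\|_\F\le(\rho(A)+\varepsilon)\|\x\|_\F$. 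This last computation is the one displayed at the end of the proof of Theorem~\ref{mainthm}.

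The one point that genuinely requires care — and where the present argument departs from Theorem~\ref{mainthm} — is that here \eqref{necsufcon} only forces the sequence $(\rho(A)+\varepsilon)^{-k}h_k$ to be \emph{bounded}, whereas in Theorem~\ref{mainthm} the strict inequality $\mu(A)+\varepsilon>\mu(A)$ made the analogous sequence tend to $0$, so that the defining supremum was attained at a finite index. Attainment, however, is never used above: finiteness of the supremum is all that steps (i)–(iii) need, and step (iv) is a term-by-term comparison valid for suprema. In particular this covers the borderline case $\mu(A)=\rho(A)+\varepsilon$, which could not be reached by merely combining Theorem~\ref{mainthm} with a limiting argument, so writing out the construction explicitly is the right course.
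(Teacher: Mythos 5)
Your proof follows the same route as the paper: necessity by chaining the submultiplicativity bound from \eqref{bdsabnorm} with the equivalence of $\|\cdot\|_\F$ and $\|\cdot\|_2$, and sufficiency by running the construction \eqref{necsufcon3} with $\rho(A)+\varepsilon$ in place of $\mu(A)+\varepsilon$. Your closing remark is in fact a refinement of a point the paper leaves implicit: the paper simply invokes ``the arguments of the proof of Theorem \ref{mainthm}'', but those arguments used the decay-to-zero statement \eqref{necsufcon1} to obtain attainment of the maximum at a finite index, which is unavailable when \eqref{necsufcon} only supplies boundedness (the borderline case $\rho(A)+\varepsilon=\mu(A)$), and your observation that a pointwise-finite supremum of seminorms is already a seminorm, so attainment is never needed, covers exactly that case.
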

\begin{proof}  Suppose first that there exists an absolute norm on $\F^n$ such that
 \eqref{epsupbdF} holds.  As in the proof of Theorem \ref{mainthm} we deduce that 
\begin{equation*}
(\rho(A)+\varepsilon)^{-k}\|AD_1\cdots D_{k-1}AD_k\x\|_\F\le(\rho(A)+\varepsilon)^{-k}\|A\|^k_\F\|\x\|_\F\le \|\x\|_\F. 
\end{equation*}
In view of equivalence of norms $\|\x\|_\F$ and $\|\x\|_2$ we deduce
\begin{eqnarray*}
(\rho(A)+\varepsilon)^{-k}\|AD_1\cdots D_{k-1}AD_k\x\|_2\le\\
(\rho(A)+\varepsilon)^{-k}K(\|\cdot\|_\F) \|AD_1\cdots D_{k-1}AD_k\x\|_\F\le\\
K(\|\cdot\|_\F)\|\x\|_\F\le K(\|\cdot\|_\F)^2\|\x\|_2.
\end{eqnarray*}
 Hence the condition \eqref{necsufcon} holds.
 
 Assume now that the condition \eqref{necsufcon} holds.  Define $\|\x\|_\F$
 as in \eqref{necsufcon3} by replacing $(\mu(A)+\varepsilon)$ with $(\rho(A)+\varepsilon)$.  Then the arguments of the proof of Theorem \ref{mainthm}
yield that $\|\x\|_\F$ is an absolute norm for which the inequality \eqref{epsupbdF} holds.
\end{proof}
\section{Additional results and remarks}\label{sec:addres}
A matrix $A\in\F^{n\times n}$ is said to be sign equivalent to $B\in\F^{n\times n}$ if $A=D_1 B D_2$ for some $D_1,D_2\in \cD_n(\F)$.  A matrix $B\in\R^{n\times n}$ is called nonnegative if $B=|B|$.  The following lemma generalizes the example in Corollary \ref{countex}.
\begin{lemma}\label{eqnnmat}  Let $A\in\F^{n\times n}$.  Then
\begin{equation}\label{upbndabsA}
\mu(A)\le \rho(|A|).
\end{equation}
Equality holds if $A$ is sign equivalent to $|A|$.
\end{lemma}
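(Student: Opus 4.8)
The plan is to prove the inequality $\mu(A)\le\rho(|A|)$ by bounding the spectral-norm growth of products $AD_1A\cdots D_{k-1}AD_k$ entrywise by the corresponding power of $|A|$, and then to establish the equality clause by exhibiting, when $A=D_1'|A|D_2'$, a particular choice of signs that recovers $\rho(|A|)$ from below via Lemma \ref{upbdprodlem}.

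For the inequality, the key observation is that for any $D_1,\dots,D_k\in\cD_n(\F)$ one has $|AD_1A\cdots D_{k-1}AD_k|\le|A|\,|D_1|\,|A|\cdots|D_{k-1}|\,|A|\,|D_k|=|A|^k$ entrywise, since each $|D_i|$ is the identity matrix and $|BC|\le|B||C|$ always holds. Hence for a unit vector $\x$, writing $M=AD_1A\cdots AD_k$, I would estimate $\|M\x\|_2\le \||M||\x|\|_2\le \||A|^k|\x|\|_2\le \||A|^k\|_2$, using monotonicity of the Euclidean norm under entrywise absolute values. Therefore the quantity inside the $\limsup$ in \eqref{defmuA} is at most $\||A|^k\|_2$, so $\mu(A)\le\limsup_{k\to\infty}\||A|^k\|_2^{1/k}=\rho(|A|)$ by Gelfand's formula. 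This part is routine; I would present it in a couple of lines.

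For the equality clause, assume $A=D_1'|A|D_2'$ with $D_1',D_2'\in\cD_n(\F)$. Combined with the inequality just proved, it suffices to show $\mu(A)\ge\rho(|A|)$. By \eqref{mincharmuA} (Theorem \ref{mainthm}), $\mu(A)=\inf\{\|A\|_\F:\|\cdot\|_\F\in\cN(n,\F)\}$, so I need $\|A\|_\F\ge\rho(|A|)$ for every absolute norm. By the third inequality of \eqref{bdsabnorm}, $\|A\|_\F\ge\rho(DA)$ for every $D\in\cD_n(\F)$; also $\|A\|_\F=\|D_1AD_2\|_\F$ for all $D_1,D_2\in\cD_n(\F)$, so without loss of generality I may replace $A$ by $|A|$, i.e. it is enough to find $D\in\cD_n(\F)$ with $\rho(D|A|)=\rho(|A|)$ — and $D=I_n$ works. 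Hence $\|A\|_\F=\||A|\|_\F\ge\rho(|A|)$, giving $\mu(A)=\rho(|A|)$.

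The main subtlety, and the step I would be most careful about, is the passage $\|A\|_\F=\||A|\|_\F$: this uses the first identity of \eqref{bdsabnorm} together with the sign-equivalence hypothesis $A=D_1'|A|D_2'$, and it is precisely here that the hypothesis is consumed — without it one only gets $\mu(A)\le\rho(|A|)$, which can be strict, as Corollary \ref{countex} shows (there $|A|$ has spectral radius $2$ but $A$ is nilpotent, and indeed $A$ is \emph{not} sign equivalent to $|A|$). I would conclude by noting that the chain $\rho(|A|)\le\|\,|A|\,\|_\F=\|A\|_\F$ holds for \emph{every} absolute norm, so taking the infimum yields $\rho(|A|)\le\mu(A)$, and combined with \eqref{upbndabsA} this forces equality.
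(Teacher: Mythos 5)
Your proof is correct, but it takes a genuinely different route from the paper's on both halves. For the upper bound $\mu(A)\le\rho(|A|)$ you work directly with the definition \eqref{defmuA}: the entrywise bound $|AD_1A\cdots D_{k-1}AD_k|\le |A|^k$ (valid since each $|D_i|=I_n$), monotonicity of $\|\cdot\|_2$ under entrywise domination, and Gelfand's formula $\lim_{k\to\infty}\||A|^k\|_2^{1/k}=\rho(|A|)$. The paper instead uses monotonicity of an arbitrary absolute norm to get $\|A\|_\F\le\||A|\|_\F$, and then must show that absolute norms can push $\||A|\|_\F$ down to $\rho(|A|)$; it does this with the weighted $\ell_1$ norms $\nu(\x)=\bw^\top|\x|$ and the Collatz--Wielandt characterization of the spectral radius, splitting into the case where $|A|$ is irreducible (the left Perron vector gives $\nu(|A|)=\rho(|A|)$ exactly) and the reducible case (where one only gets within $\varepsilon$). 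For the equality clause, your lower bound and the paper's are essentially the same use of Lemma \ref{upbdprodlem}: you write $\|A\|_\F=\|D_1'|A|D_2'\|_\F=\||A|\|_\F\ge\rho(|A|)$, while the paper argues $\|A\|_\F\ge\rho(D_2^{-1}D_1^{-1}A)=\rho(|A|)$ via similarity. The net effect is that your argument is shorter and entirely avoids Perron--Frobenius theory and the irreducible/reducible case analysis, whereas the paper's construction is more informative: it exhibits explicit absolute norms (weighted $\ell_1$ norms) witnessing the infimum $\rho(|A|)$ to within $\varepsilon$, and attaining it exactly when $|A|$ is irreducible.
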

\begin{proof}  Let $B=|A|$ and assume that $A$ is sign equivalent to $B$.
Suppose first that $B$ is an irreducible matrix.  That is $(I_n+B)^{n-1}$ is a positive matrix.  Then Perron-Frobenius theorem \cite{Frib} yields that there exist positive eigenvector $\bu$ and  $\bv$ of $B$ and  $B^\top$ respectively such that
\begin{equation*}
B\bv=\rho(B)\bu, \quad  B^\top\bu=\rho(B)\bu, \quad \|\bv\|_2=1,\bv^\top\bu=1, \rho(A)>0.  
\end{equation*}
For a positive vector $\bw\in\R^n$ 
define an absolute norm on $\F^n$:
\begin{equation}\label{defnunorm}
\nu(\x)=\bw^\top |\x|.
\end{equation}
Assume that $\bw=\bv$.
Then 
\begin{eqnarray*}
\nu(\bu)=\bv^\top \bu=1, \quad \nu(B\bu)=\rho(B)\nu(\bu)=\rho(B),\\
\nu(B\x)=\bv^\top |B\x|\le \bv^\top B|\x|=\rho(B)\bv^\top|\x|=\rho(B)\nu(\x).
\end{eqnarray*}
Hence $\nu(B)=\rho(B)$.  As $\nu$ is absolute we deduce from \eqref{bdsabnorm} that $\nu(A)=\nu(B)=\rho(B)$.
Next observe that  
$B=D_1^{-1}A D_2^{-1}$ is similar to $D_2^{-1}D_1^{-1}A$.  Hence $\rho(D_2^{-1}D_1^{-1}A)=\rho(B)$.  Inequalities \eqref{bdsabnorm} yield that $\|A\|_\F\ge \rho(B)$ for any absolute norm. Hence $\mu(A)=\rho(|A|)$.

Assume that $B$ is a reducible matrix and $\bw>\0$. Let $\nu$ be defined by  \eqref{defnunorm}.  Note that  $\nu$ is a weighted $\ell_1$ norm.  It is straightforward to show that
\begin{equation*}
\nu(B)=\max\{\frac{(B^\top \bw)_i}{w_i}, i\in[n]\}.
\end{equation*}
See \cite[Example 5.6.4]{HJ} for the case $\bw=(1,\ldots,1)^\top$.  Recall the generalized Collatz-Wielandt characterization of the spectral radius of $\rho(B)=\rho(B^\top)$ \cite[Part (1), Theorem 3.2]{Fri20}:
\begin{equation*}
\rho(B)=\inf_{\bw>\0} \max\{\frac{(B^\top \bw)_i}{w_i}, i\in[n]\}.
\end{equation*}
Hence for a given $\varepsilon>0$ there exists $\bw>0$ such that 
\begin{equation*}
\rho(B)\le \nu(B)\le \rho(B)+\varepsilon.
\end{equation*}
Therefore $\mu(A)=\rho(|A|)$.

Assume now that $A$ is not sign equivalent to $|A|$.  Suppose that $\|\x\|_\F$ is an absolute norm.  Observe that $|A\x|\le |A||\x|$.   As $\|\x\|_\F$ is a monotone norm it follows that $\|A\x\|_\F\le \||A||\x|\|_\F$, and $\|A\|_\F\le \||A|\|_\F$ .  Hence inequality \eqref{upbndabsA} holds.
\end{proof}

We close with section with a brief discussion of absolute norms on $\R^n$ and $\C^n$.
Clearly if $\|\x\|_\C$ is an absolute norm on $\C^n$ then the restriction of this norm on $\R^n$ gives an absolute norm $\|\x\|_\R$.  It is also well known that given an absolute norm $\|\x\|_\R$ it induces an absolute norm $\|\x\|_\C$ on $\C^n$  by the equality $\|\x\|=\||\x|\|_\R$.  Indeed, for $\x,\y\in\C^n$ we have that 
$|\x+\y|\le |\x|+|\y|$. The monotonicity of $\|\x\|_\R$ yields 
\begin{eqnarray*}
\|\x+\y\|_\C=\||\x+\y|\|_\R\le \||\x|+|\y|\|_\R\le \||\x|\|_\R+\||\y|\|_\R=\|\x\|_\C+\|\y\|_\C.
\end{eqnarray*}

Assume now that $\|\x\|_\R$ is an absolute norm on $\R^n$.  Let $A\in\R^{n\times n}$.  Then $\|A\|_\C$ is the induced norm by $\|\x\|_\C$.  Clearly
\begin{equation}\label{basmaj}
\|A\|_\R\le \|A\|_\C.
\end{equation}
It is not obvious to the author that one has always equality in the above inequality for a general absolute norm on $\R^n$.
$$\\$$
{\em Acknowledgment}---I thank Jamal Najim for posing the problem that inspired this paper. 
The author was partially supported by Simons collaboration grant for mathematicians.

\bibliographystyle{plain}

\end{document}